\newtheorem{theorem}{Theorem}
\newtheorem{lemma}{Lemma}
\newenvironment{proof}
      {\medskip\noindent{\bf Proof:}\hspace{1mm}}
      {\hfill$\Box$\medskip}
\def\Ddots{\mathinner{\mkern1mu\raise\p@
\vbox{\kern7\p@\hbox{.}}\mkern2mu
\raise4\p@\hbox{.}\mkern2mu\raise7\p@\hbox{.}\mkern1mu}}
\title{\vspace{-0.7cm}Sidorenko's conjecture for a class of graphs: an exposition}
\author{David Conlon\thanks{Mathematical Institute, Oxford OX1 3LB, UK. Email: {\tt david.conlon@maths.ox.ac.uk}.} \and Jacob Fox\thanks{Department of Mathematics, MIT, Cambridge, MA 02139-4307. Email: {\tt fox@math.mit.edu}.} \and Benny Sudakov\thanks{Department of Mathematics, UCLA,  Los Angeles, CA 90095. Email: {\tt bsudakov@math.ucla.edu}.}}
\date{}
\begin{document}
\maketitle

A famous conjecture of Sidorenko \cite{Si3} and Erd\H{o}s-Simonovits \cite{Sim} states that if $H$ is a bipartite graph then the random graph with edge density $p$ has in expectation asymptotically the minimum number of copies of $H$ over all graphs of the same order and edge density. The goal of this expository note is to give a short self-contained 
proof (suitable for teaching in class) of the conjecture if $H$ has a vertex complete to all vertices in the other part. This was originally proved in \cite{CFS10}.

\begin{theorem}\label{main}
Sidorenko's conjecture holds for every bipartite graph $H$ which has a vertex complete to the other part.
\end{theorem}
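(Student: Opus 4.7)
Let $a_0 \in A$ be the vertex adjacent to every vertex of $B$. Set $A' = A \setminus \{a_0\}$, $k = |B|$, and $H' = H - a_0$. Write $d(v)$ for the degree of $v \in V(G)$ and $p = \tfrac{1}{n^2}\sum_v d(v)$ for the edge density. The plan is to split the homomorphism count by the image $u$ of $a_0$, bound the inner count via iterated Jensen inequalities, and finally apply a convexity inequality on the degree sequence.

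Since $a_0$ is adjacent to every $b \in B$, conditioning on $u = \phi(a_0)$ forces $\phi(B) \subseteq N(u)$; and since the $\phi(b)$ are independent given the rest of $\phi$, summing first over $\vec{y} = \phi|_B$ factors over $b \in B$, yielding
\[
\hom(H,G) \;=\; \sum_{u\in V(G)}\,\underbrace{\sum_{\vec{x}\in V(G)^{A'}}\; \prod_{b\in B}\,\Bigl|\,N(u)\cap \bigcap_{a\in N_{H'}(b)} N(x_a)\,\Bigr|}_{=:\; M_u}.
\]
I would bound $M_u$ from below by iteratively applying Jensen's inequality (convexity of $t\mapsto t^{d_a}$) once for each $a \in A'$: since each $a$ appears in exactly $d_a := d_H(a)$ of the $b$-factors, its contribution to the product can be collapsed via the power-mean inequality, producing a factor involving $p^{d_a}$ while leaving a bound in terms of $d(u)$ and codegree-type quantities. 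Iterating over all $a \in A'$, the inner bound reduces to a nonnegative polynomial in the degree sequence of $G$ whose total $p$-content accumulates to $p^{e(H')}$, multiplied by a factor $d(u)^k$ coming from the $k$ copies of $|N(u)|$ contributed by the star at $a_0$.

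Summing over $u$ and applying Jensen once more to the degree sequence (the power-mean inequality $\sum_u d(u)^k \geq n \cdot (pn)^k$) yields
\[
\hom(H,G) \;\geq\; p^{k+e(H')}\cdot n^{|V(H)|} \;=\; p^{e(H)}\cdot n^{|V(H)|},
\]
as required. The main obstacle is the iterated Jensen step: the $b$-factors in the product share variables ($u$ appears in all of them, and overlapping $N_{H'}(b)$'s share $x_a$-variables), so a naive factor-by-factor application would double-count contributions. The resolution is to perform the Jensen steps in the right order—processing one $a \in A'$ at a time, using convexity of $t^{d_a}$ to pull out the corresponding $x_a$-sum—while carefully accounting for the $d(u)^k$ factor. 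One can sanity-check this plan on small cases such as $K_{a,b}$, $K_{2,3}$, and $K_{3,3}$, where the iteration is a short double application of Jensen, and the pattern extends to all $H$ in the class.
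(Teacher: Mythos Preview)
Your sketch has a real gap at the iterated Jensen step, and the scheme as stated already fails for $H=K_{2,2}$. There $A'=\{a_1\}$, $k=2$, and $M_u=\sum_{x}|N(u)\cap N(x)|^{2}=\sum_{y_1,y_2\in N(u)}|N(y_1)\cap N(y_2)|$. The pointwise inequality you need, $M_u\ge d(u)^{2}p^{2}n$, is simply false: take $G$ consisting of a clique on $n^{0.8}$ vertices (so $p\approx n^{-0.4}$ and $p^{2}n\approx n^{0.2}$) together with a vertex $u$ whose $D$ neighbours each have exactly one further private neighbour; then $M_u=D^{2}+D$ while $d(u)^{2}p^{2}n\approx D^{2}n^{0.2}$. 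In other words, the average degree over $N(u)$ can be far below the global average $pn$, so one cannot first bound each $M_u$ by $d(u)^{k}$ times a quantity independent of $u$ and only afterwards apply $\sum_u d(u)^{k}\ge n(pn)^{k}$; the $u$-sum must be interleaved with the other convexity steps, and you have not said how. For $H$ beyond complete bipartite graphs the obstruction is sharper still: when two vertices $a,a'\in A'$ have overlapping but unequal neighbourhoods in $B$, the $d_a$ factors in which $x_a$ occurs are \emph{different} functions of $x_a$ (each involves a different subset of the remaining variables), so their product is not a $d_a$-th power and convexity of $t\mapsto t^{d_a}$ is inapplicable. Your own diagnosis that ``the $b$-factors share variables'' is exactly the obstruction; ``processing the $a$'s in the right order'' does not remove it, since no ordering makes those factors coincide.

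The paper's proof is entirely different and makes no attempt at a sharp direct inequality. It uses dependent random choice (Lemma~\ref{goodstep}) to show that the vertices carrying at least half of the total degree are ``good'', meaning that for every $k\le n$ all but a $\tfrac{1}{2n}$-fraction of $k$-tuples in $N(v)$ have common neighbourhood of size at least $(2n)^{-n-1}p^{k}N$. A union-bound embedding (Lemma~\ref{randomembed}) then yields only the lossy estimate $t_H(G)\ge (2n)^{-n^{2}}\,t_{K_2}(G)^{m}$ (Lemma~\ref{importantstep}). The sharp constant $1$ is recovered afterwards by a tensor-power trick: if $t_H(G)=c\,p^{m}$ with $c<1$, apply the lossy bound to $G^{r}$ with $r$ large enough that $c^{r}<(2n)^{-n^{2}}$ to obtain a contradiction.
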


The original formulation of the conjecture by Sidorenko is in terms of graph homomorphisms. A homomorphism from a graph $H$ to a graph $G$ is a mapping $f:V(H) \rightarrow V(G)$ such that for each edge $(u,v)$ of $H$, $(f(u),f(v))$ is an edge of $G$. Let $h_H(G)$ denote the number of homomorphisms from $H$ to $G$. We also consider the normalized function $t_H(G)=h_H(G)/|G|^{|H|}$, which is the fraction of mappings $f:V(H) \rightarrow V(G)$ which are homomorphisms. Sidorenko's conjecture states that for every bipartite graph $H$ with $m$ edges and every graph $G$, $t_H(G) \geq t_{K_2}(G)^m$. We will prove that this is the case for $H$ as in Theorem~\ref{main}.

We use a probabilistic technique known as dependent random choice. The idea is that most small subsets of the neighborhood of a random vertex have large common neighborhood. Our first lemma gives a counting version of this technique. We will then combine this with a simple embedding lemma to give a lower bound for $t_H(G)$ in terms of $t_{K_2}(G)$. For a vertex $v$ in a graph $G$, the {\it neighborhood} $N(v)$ is the set of vertices adjacent to $v$. For a sequence $S$ of vertices of a graph $G$, the {\it common neighborhood} $N(S)$ is the set of vertices adjacent to every vertex in $S$. 

\begin{lemma} \label{goodstep}
Let $G$ be a graph with $N$ vertices and $pN^2/2$ edges. Call a vertex $v$ bad with respect to $k$ if the number of sequences of $k$ vertices in $N(v)$ with at most $(2n)^{-n-1}p^kN$ common neighbors is at least $\frac{1}{2n}|N(v)|^k$. Call $v$ good if it is not bad with respect to $k$ for all $1 \leq k \leq n$. Then the sum of the degrees of the good vertices is at least $pN^2/2$.
\end{lemma}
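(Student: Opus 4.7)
The plan is to bound the sum of degrees of the \emph{bad} vertices (those that fail to be good for at least one $k \in \{1,\ldots,n\}$) by $pN^2/2$ and subtract from the total degree sum $2 \cdot pN^2/2 = pN^2$. Letting $B_k$ denote the set of vertices bad with respect to $k$, I will show that $\sum_{v \in B_k} |N(v)| \leq pN^2/(2n)$ for each $k$, so a union bound over the $n$ values of $k$ yields the required estimate $pN^2/2$.

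For the slice-wise bound, I unpack the definition of bad: every $v \in B_k$ satisfies
\[
|N(v)|^k \;\leq\; 2n \cdot \big|\{S \in N(v)^k : |N(S)| \leq (2n)^{-n-1} p^k N\}\big|.
\]
Summing over $v \in B_k$ (and trivially extending to a sum over all $v$ on the right) and then switching the order of summation---each small sequence $S$ arises from exactly the $|N(S)|$ vertices $v \in N(S)$---gives
\[
\sum_{v \in B_k} |N(v)|^k \;\leq\; 2n \sum_{S \text{ small}} |N(S)| \;\leq\; 2n \cdot N^k \cdot (2n)^{-n-1} p^k N \;=\; (2n)^{-n} p^k N^{k+1}.
\]
Next I apply H\"older's inequality with exponents $k$ and $k/(k-1)$ to pass from the $k$th moment to the first moment:
\[
\sum_{v \in B_k} |N(v)| \;\leq\; |B_k|^{1-1/k}\Big(\sum_{v \in B_k} |N(v)|^k\Big)^{1/k} \;\leq\; N^{1-1/k} \cdot \big((2n)^{-n} p^k N^{k+1}\big)^{1/k} \;=\; (2n)^{-n/k}\, pN^2.
\]
Since $1 \leq k \leq n$ forces $n/k \geq 1$, we have $(2n)^{-n/k} \leq (2n)^{-1}$, so $\sum_{v \in B_k} |N(v)| \leq pN^2/(2n)$ as claimed.

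Finally, because the bad set is contained in $\bigcup_{k=1}^n B_k$, summing gives $\sum_{v \text{ bad}} |N(v)| \leq n \cdot pN^2/(2n) = pN^2/2$, so the good vertices account for at least $pN^2 - pN^2/2 = pN^2/2$ of the total degree sum. I expect the only delicate point to be the numerology of the threshold $(2n)^{-n-1}p^k N$: the exponent $-n-1$ is calibrated precisely so that after the factor $2n$ from the definition of bad and the $k$th root from H\"older's inequality, the resulting per-$k$ bound $(2n)^{-n/k}pN^2$ is uniformly at most $pN^2/(2n)$ for all $1 \leq k \leq n$, which is exactly what is needed to survive the union bound over the $n$ values of $k$.
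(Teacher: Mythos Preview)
Your proof is correct and follows essentially the same route as the paper: both arguments double-count pairs $(v,S)$ with $v\in N(S)$ and $S$ small to bound $\sum_{v\in B_k}|N(v)|^k$, then pass to $\sum_{v\in B_k}|N(v)|$ via the power-mean inequality (your H\"older step with $|B_k|\le N$ is exactly the paper's convexity step with the sum padded out to all $N$ vertices), and finish with the same union bound over $k$.
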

\begin{proof}
We write $v \sim k$ to denote that $v$ is bad with respect to $k$. Let $X_k$ denote the number of pairs $(v,S)$ with $S$ a sequence of $k$ vertices, $v$ a vertex adjacent to every vertex in $S$, and $|N(S)| \leq (2n)^{-n-1}p^kN$. We have $$(2n)^{-n-1}p^k N \cdot N^k \geq X_k \geq \sum_{v, v \sim k}\frac{1}{2n} |N(v)|^k \geq \frac{1}{2n} N(\sum_{v, v \sim k}|N(v)|/N)^k = \frac{1}{2n} N^{1-k}(\sum_{v, v \sim k}|N(v)|)^k.$$
The first inequality is by summing over $S$, the second inequality is by summing over vertices $v$ which are bad with respect to $k$, and the third  inequality is by convexity of the function $f(x)=x^k$. We therefore get $$\sum_{v,v\sim k} |N(v)| \leq (2n)^{-n/k}pN^2 \leq \frac{1}{2n}pN^2.$$
Hence, $$\sum_{v,v~\textrm{good}} |N(v)| \geq \sum_{v} |N(v)| - \sum_{k=1}^n \sum_{v,v\sim k} |N(v)| \geq pN^2-n \cdot \frac{1}{2n}pN^2 = pN^2/2,$$
as required.
\end{proof}

\begin{lemma} \label{randomembed}
Suppose $\mathcal{H}$ is a hypergraph with $v$ vertices and at most $e$ edges and $\mathcal{G}$ is a hypergraph on $N$ vertices with the property that for each $k$, $1 \leq k \leq v$, the number of sequences of $k$ vertices of $\mathcal{G}$ that do not form an edge of $\mathcal{G}$ is at most $\frac{1}{2e}N^k$. Then the number of homomorphisms from $\mathcal{H}$ to $\mathcal{G}$ is at least $\frac{1}{2}N^v$.
\end{lemma}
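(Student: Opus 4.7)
The plan is to prove the lemma by complementation and a union bound: rather than counting homomorphisms directly, I will count the mappings $f : V(\mathcal{H}) \to V(\mathcal{G})$ that \emph{fail} to be a homomorphism and show that this count is at most $\frac{1}{2}N^v$, so that the remaining at least $\frac{1}{2}N^v$ mappings are homomorphisms. Since there are $N^v$ mappings in total, this is exactly what is needed.

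The key observation is that a mapping $f$ is not a homomorphism precisely when there exists some edge $E$ of $\mathcal{H}$ whose image under $f$ is not an edge of $\mathcal{G}$. I would therefore fix an edge $E$ of $\mathcal{H}$, say with $k$ vertices, and estimate the number of mappings $f$ for which the image sequence of $E$ is a non-edge. The coordinates of $f$ on the $k$ vertices of $E$ are constrained to form one of the non-edge sequences of length $k$ in $\mathcal{G}$, of which there are at most $\frac{1}{2e}N^k$ by hypothesis, while the other $v-k$ vertices of $\mathcal{H}$ may be mapped arbitrarily, contributing a factor of $N^{v-k}$. Hence the number of mappings ruined by $E$ is at most $\frac{1}{2e}N^k \cdot N^{v-k} = \frac{1}{2e}N^v$.

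Summing this bound over the at most $e$ edges of $\mathcal{H}$ via the union bound yields at most $e \cdot \frac{1}{2e}N^v = \frac{1}{2}N^v$ non-homomorphisms, completing the proof. There is no real obstacle here: the only mild subtlety is making sure the accounting handles sequences versus edges consistently, which is why the hypothesis is stated in terms of ordered $k$-tuples, matching the sequence of values $(f(u_1),\dots,f(u_k))$ for an edge $(u_1,\dots,u_k)$ of $\mathcal{H}$. Beyond that, the argument is a clean complement-plus-union-bound calculation.
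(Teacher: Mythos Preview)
Your proof is correct and is essentially identical to the paper's own argument: the paper phrases it probabilistically (a uniformly random map fails on a fixed edge with probability at most $\frac{1}{2e}$, then applies the union bound), while you phrase the same computation as a direct count of non-homomorphisms, but the two are the same calculation.
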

\begin{proof}
Consider a random mapping of the vertices of $\mathcal{H}$ to the vertices of $\mathcal{G}$. The probability that a given edge of $\mathcal{H}$ does not map to an edge of $\mathcal{G}$ is at most $\frac{1}{2e}$. By the union bound, the probability that there is an edge of $\mathcal{H}$ that does not map to an edge of $\mathcal{G}$ is at most $e \cdot \frac{1}{2e}=1/2$. Hence, with probability at least $1/2$, a random mapping gives a homomorphism, so there are at least $\frac{1}{2}N^v$ homomorphisms from $\mathcal{H}$ to $\mathcal{G}$.
\end{proof}

\begin{lemma}\label{importantstep}
Let $H=(V_1,V_2,E)$ be a bipartite graph with $n$ vertices and $m$ edges such that there is
a vertex $u \in V_1$ which is adjacent to all vertices in $V_2$. Let $G$ be a graph with $N$ vertices and $pN^2/2$ edges, so $t_{K_2}(G)=p$. Then the number of homomorphisms from $H$ to $G$ is at least
$(2n)^{-n^2}p^mN^n$.
\end{lemma}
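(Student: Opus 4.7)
The plan is to exploit the special vertex $u \in V_1$ that is adjacent to all of $V_2$: in any homomorphism $f:H \to G$, once $f(u)=v$ is fixed, the image of $V_2$ must lie entirely inside $N(v)$, and then each remaining vertex $w \in V_1 \setminus \{u\}$ must be sent into the common neighborhood $N(f(S_w))$, where $S_w \subseteq V_2$ denotes the set of neighbors of $w$. Lemmas~\ref{goodstep} and \ref{randomembed} are tailor-made for exactly this situation: the first identifies vertices $v$ whose neighborhoods are ``rich'' in short sequences with large common neighborhood, and the second lets us count maps $V_2 \to N(v)$ whose images on each $S_w$ land on such a rich sequence.

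Concretely, I would first invoke Lemma~\ref{goodstep} to pick out the good vertices of $G$, whose degrees sum to at least $pN^2/2$, and only count homomorphisms that send $u$ to a good $v$. For each such $v$ I would set up two hypergraphs to feed into Lemma~\ref{randomembed}: the hypergraph $\mathcal{H}$ on vertex set $V_2$ whose edges are the sets $S_w$ for $w \in V_1 \setminus \{u\}$ (so $\mathcal{H}$ has $n_2 := |V_2|$ vertices and at most $|V_1|-1 \leq n-1$ edges), and the hypergraph $\mathcal{G}$ on vertex set $N(v)$ whose length-$k$ edges are exactly those sequences with common neighborhood of size at least $(2n)^{-n-1}p^k N$. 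Since $v$ is good and the number of edges of $\mathcal{H}$ is at most $n-1 < n$, the hypothesis of Lemma~\ref{randomembed} is satisfied, and I obtain at least $\tfrac{1}{2}|N(v)|^{n_2}$ maps $g:V_2 \to N(v)$ under which every $g(S_w)$ has common neighborhood of size at least $(2n)^{-n-1}p^{|S_w|}N$.

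For each such $g$, I can extend independently over $w \in V_1 \setminus \{u\}$ by choosing $f(w) \in N(g(S_w))$, giving at least
$$\prod_{w \neq u}(2n)^{-n-1}p^{|S_w|}N \;=\; (2n)^{-(n+1)(|V_1|-1)}\,p^{m-n_2}\,N^{|V_1|-1}$$
extensions, using that $\sum_{w \neq u}|S_w| = m-n_2$. Multiplying by the count of good $g$, summing over good $v$, and applying the power-mean inequality to $\sum_{v \text{ good}}|N(v)|^{n_2}$ using $\sum_{v \text{ good}}|N(v)| \geq pN^2/2$, I get a lower bound of the form $C(n)\, p^m N^n$ for an explicit constant $C(n)$.

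The one thing that requires care is the bookkeeping of exponents: I need to check that the accumulated factor $2^{-(n_2+1)}(2n)^{-(n+1)(|V_1|-1)}$ is at least $(2n)^{-n^2}$. The computation is short: bounding $2^{-(n_2+1)} \geq (2n)^{-(n_2+1)}$ and simplifying $(n_2+1)+(n+1)(|V_1|-1) = n\,|V_1| \leq n^2$, the exponent collapses neatly. This is where the slack is tightest but still sufficient. The conceptual content is entirely in the construction of $\mathcal{H}$ and $\mathcal{G}$ above; everything else is mechanical.
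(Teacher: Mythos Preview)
Your proposal is correct and follows essentially the same argument as the paper: map $u$ to a good vertex $v$, encode the neighborhoods $S_w$ as a hypergraph $\mathcal{H}$ on $V_2$, apply Lemma~\ref{randomembed} inside $N(v)$, extend over $V_1\setminus\{u\}$, and finish with Jensen together with the degree-sum bound from Lemma~\ref{goodstep}. Your exponent bookkeeping $(n_2+1)+(n+1)(n_1-1)=n\,n_1\le n^2$ is exactly the verification needed, and is in fact cleaner than the paper's display, which contains a harmless typo ($(n-1)$ in place of $(n+1)$) in the intermediate step.
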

\begin{proof}
Let $n_i=|V_i|$ for $i \in \{1,2\}$. We will give a lower bound on the number of homomorphisms $f:V(H) \rightarrow V(G)$ that map $u$ to a good vertex $v$ of $G$. Suppose we have already picked $f(u)=v$. Let $\mathcal{H}$ be the hypergraph with vertex set $V_2$, where $S \subset V_2$ is an edge of $\mathcal{H}$ if there is a vertex $w \in V_1 \setminus \{v\}$ such that $N(w)=S$. The number of vertices of $\mathcal{H}$ is $n_2$, which is at most $n$, and the number of edges of $\mathcal{H}$ is $n_1 - 1$, which is also at most $n$. Let $\mathcal{G}$ be the hypergraph on $N(v)$, where a sequence $R$ of $k$ vertices of $N(v)$ is an edge of $\mathcal{G}$ if $|N(R)| \geq (2n)^{-n-1}p^kN$. Since $v$ is good, for each $k$, $1 \leq k \leq v$, the number of sequences of $k$ vertices of $\mathcal{G}$ that are not the vertices of an edge of $\mathcal{G}$ is at most $\frac{1}{2n}N^k$. Hence, by Lemma \ref{randomembed}, there are at least $\frac{1}{2}|N(V)|^{n_2}$ homomorphisms $g$ from $\mathcal{H}$ to $\mathcal{G}$. Pick one such homomorphism $g$, and let $f(x)=g(x)$ for $x \in V_2$. By construction, once we have picked $f(u)$ and $f(V_2)$, there are at least $(2n)^{-n-1}p^{|N(w)|}N$ possible choices for $f(w)$ for each vertex $w \in V_1$. Hence, the number of homomorphisms from $H$ to $G$ is at least
\begin{eqnarray*}\sum_{v~\textrm{good}}\frac{1}{2}|N(v)|^{n_2}\prod_{w \in V_1 \setminus \{v\}}(2n)^{-n-1}p^{|N(w)|}N & = &   \frac{1}{2}(2n)^{-(n-1)(n_1-1)}p^{m-n_2}N^{n_1-1}\sum_{v~\textrm{good}}|N(v)|^{n_2} \\ & \geq &  \frac{1}{2}(2n)^{-(n-1)(n_1-1)}p^{m-n_2}N^{n_1-1}N\left(\sum_{v~\textrm{good}}|N(v)|/N\right)^{n_2} \\ & \geq  & \frac{1}{2}(2n)^{-(n-1)(n_1-1)}p^{m-n_2}N^{n_1}(pN/2)^{n_2}
\\ & \geq & (2n)^{-n^2}p^mN^{n}.
\end{eqnarray*}
The first inequality is by convexity of the function $q(x)=x^k$ and the second inequality is by the lower bound on the sum of the degrees of good vertices given by Lemma \ref{goodstep}.
\end{proof}

We next complete the proof of Theorem \ref{main} by improving the bound in the previous lemma on the number of homomorphisms from $H$ to $G$ using a tensor power trick. The tensor product $F \times G$ of two graphs $F$ and $G$ has vertex set $V(F) \times V(G)$ and any two
vertices $(u_1,u_2)$ and $(v_1,v_2)$ are adjacent in $F \times G$ if and only if $u_i$ is adjacent with $v_i$ for $i \in \{1,2\}$.
Let $G^1=G$ and $G^r=G^{r-1} \times G$. Note that $t_H(F \times G)=t_H(F) \times t_H(G)$ for all $H,F,G$.

\vspace{5mm}
{\bf Proof of Theorem \ref{main}:} Suppose for contradiction that there is a graph $G$ such that $t_{H}(G)<t_{K_2}(G)^m$. Denote the number of edges of $G$ as $pN^2/2$, so $t_{K_2}(G)=p$. Let $c=\frac{t_H(G)}{t_{K_2}(G)^m}<1$. Let $r$ be such that $c^r<(2n)^{-n^2}$. Then
$$t_{H}(G^r)=t_H(G)^r=c^rt_{K_2}(G)^{mr}=c^rt_{K_2}(G^r)^{m}<(2n)^{-n^2}t_{K_2}(G^r)^{m}.$$
However, this contradicts Lemma \ref{importantstep} applied to $H$ and $G^r$. This completes the proof. 
 {\hfill$\Box$\medskip}


\begin{thebibliography}{}

\bibitem{CFS10}
{D. Conlon, J. Fox and B. Sudakov,} {An approximate version of Sidorenko's conjecture,} {\it Geom. Funct. Anal.} {\bf 20} (2010), 1354--1366.

\bibitem{Si3}
A. F. Sidorenko,
\newblock A correlation inequality for bipartite graphs,
\newblock {\it Graphs Combin.} {\bf 9} (1993), 201--204.

\bibitem{Sim}
M. Simonovits, Extremal graph problems, degenerate extremal problems
and super-saturated graphs, in: {\it Progress in graph theory} (J.
A. Bondy ed.), Academic, New York, 1984, 419--437.

\end{thebibliography}
\end{document}